\title{The Gabriel--Roiter filtration of the Ziegler spectrum}
\thanks{Version from November 30, 2011.}
\author{Henning Krause}
\address{Henning Krause\\ Fakult\"at f\"ur Mathematik\\
Universit\"at Bielefeld\\ D-33501 Bielefeld\\ Germany.}
\email{hkrause@math.uni-bielefeld.de}
\author{Mike Prest}
\address{Mike Prest\\School of Mathematics\\
Alan Turing Building\\
University of Manchester\\
Manchester M13 9PL\\United Kingdom.} 
\email{mprest@manchester.ac.uk}
\newtheorem{lem}{Lemma}[section]
\newtheorem{prop}[lem]{Proposition}
\newtheorem{cor}[lem]{Corollary}
\newtheorem{thm}[lem]{Theorem}
\newtheorem*{Thm}{Theorem}
\theoremstyle{remark}
\theoremstyle{definition}
\newtheorem{exm}[lem]{Example}
\numberwithin{equation}{section}
\newcommand{\smatrix}[1]{\left[\begin{smallmatrix}#1\end{smallmatrix}\right]}
\renewcommand{\mod}{\operatorname{\mathsf{mod}}\nolimits}
\newcommand{\sub}{\operatorname{\mathsf{sub}}\nolimits}
\newcommand{\id}{\operatorname{id}\nolimits}
\newcommand{\Mod}{\operatorname{\mathsf{Mod}}\nolimits}
\newcommand{\Mon}{\operatorname{\mathsf{Mon}}\nolimits}
\newcommand{\Ind}{\operatorname{\mathsf{Ind}}\nolimits}
\newcommand{\Def}{\operatorname{\mathsf{Def}}\nolimits}
\newcommand{\Cl}{\operatorname{\mathsf{Cl}}\nolimits}
\newcommand{\Zg}{\operatorname{\mathsf{Zg}}\nolimits}
\newcommand{\GR}{\mathsf{GR}}
\newcommand{\lto}{\longrightarrow}
\def\li{\varinjlim}
\def\lp{\varprojlim}
\def\a{\alpha}
\def\b{\beta}
\def\m{\mu}
\def\la{\lambda}
\def\La{\Lambda}
\def\C{{\mathsf C}}
\def\D{{\mathsf D}}
\def\Sc{{\mathsf S}}
\def\T{{\mathsf T}}
\def\U{{\mathsf U}}
\def\V{{\mathsf V}}
\def\bbN{{\mathbb N}}
\begin{document}

\begin{abstract}
  Inclusion preserving maps from modules over an Artin algebra to
  complete partially ordered sets are studied.  This yields a
  filtration of the Ziegler spectrum which is indexed by all
  Gabriel--Roiter measures. Another application is a compactness
  result for the set of subcategories of finitely presented modules
  that are closed under submodules.
\end{abstract}

\maketitle

\section{Introduction}

Let $A$ be an Artin algebra.  We work in the category $\Mod A$ of all
$A$-modules and $\mod A$ denotes the full subcategory consisting of
all finitely presented $A$-modules. 

In this paper we combine two concepts from representation theory which
have the following in common: they are powerful but also technically
involved. Our motivation is to understand invariants of
representations which reflect the inclusion relation. Thus we study
maps $f\colon\Mod A\to\Sc$ where $\Sc$ is a partially ordered set and
for each pair $X,Y$ of $A$-modules
\[X\subseteq Y \quad \implies \quad f(X)\le f(Y).\]  The Gabriel--Roiter measure
$\mu\colon\Mod A\to \mathbf{2}^\bbN$ in the sense of Ringel
\cite{Ri2005} is an example of particular importance. Here,
$\mathbf{2}^\bbN$ denotes the power set of the set of natural numbers,
endowed with the lexicographical order.

In a recent paper \cite{R}, Ringel used the Gabriel--Roiter measure to
establish the following somewhat surprising result.  Here, an additive
subcategory of $\mod A$ is said to be of \emph{infinite type} if it
contains infinitely many non-isomorphic indecomposable objects.

\begin{Thm}[Ringel]
Each submodule closed additive subcategory of $\mod A$ that is 
of infinite type contains one which is minimal among all submodule
closed additive subcategories of infinite type.\qed
\end{Thm}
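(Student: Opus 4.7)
\emph{Proof plan.} Apply Zorn's lemma to the poset $\Omega$ of all submodule closed additive subcategories of $\mathcal{C}$ that are of infinite type, ordered by reverse inclusion; a maximal element of $\Omega$ is exactly a minimal-inclusion infinite-type subcategory inside $\mathcal{C}$, as sought. To verify Zorn's hypothesis, one must show that the intersection $\mathcal{D}_\infty = \bigcap_\lambda \mathcal{D}_\lambda$ of any descending chain $(\mathcal{D}_\lambda)$ in $\Omega$ is itself of infinite type, since submodule-closedness and additivity pass to intersections trivially.

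Control of the intersection comes from the Gabriel--Roiter measure $\mu$. Because $\mu$ is inclusion preserving, for each $I \in \mathbf{2}^{\bbN}$ the full subcategory of modules whose indecomposable summands all satisfy $\mu(Y) \le I$ is automatically submodule closed additive, giving a filtration of $\Mod A$ by submodule closed subcategories indexed by GR values. For each $\mathcal{D}_\lambda$ I would introduce the take-off value
\[
\tau_\lambda = \min\bigl\{I \in \mathbf{2}^{\bbN} : \{X \in \mathcal{D}_\lambda \text{ indec}, \ \mu(X) \le I\} \text{ is infinite}\bigr\},
\]
whose existence would rest on the well-foundedness of realized GR values on $\mod A$, a consequence of $\mu(X)$ being a finite subset of $\{1,\ldots,\mathrm{length}(X)\}$. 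Since $\mathcal{D}_\lambda$ shrinks along the chain, $(\tau_\lambda)$ is weakly increasing, and the same well-foundedness forces it to stabilize at some $\tau_\infty$. Consequently, for all large $\lambda$, the take-off strata $T_\lambda = \{X \in \mathcal{D}_\lambda \text{ indec} : \mu(X) \le \tau_\infty\}$ form a descending chain of infinite sets of indecomposables.

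The main obstacle is to show that $\bigcap_\lambda T_\lambda$ remains infinite; a priori one could fear that indecomposables get eliminated one at a time along the chain so that nothing survives. The rigidity supplied by submodule-closedness, namely that discarding $X$ forces also discarding every module containing $X$ as a submodule, is precisely what should prevent this. Formalising that rigidity into a compactness statement for the poset of submodule closed subcategories of $\mod A$ -- along the lines signalled in the abstract, presumably by embedding into a compact topological space related to the Ziegler spectrum -- is where the heart of the work should lie. Once that compactness is in hand, it delivers infinitely many indecomposables in $\bigcap_\lambda T_\lambda \subseteq \mathcal{D}_\infty$, giving the desired infinite type, and Zorn's lemma concludes.
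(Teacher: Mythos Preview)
Your Zorn's-lemma frame---reduce to showing that the intersection of a descending chain of infinite-type submodule closed subcategories remains of infinite type---is exactly the paper's. But the Gabriel--Roiter detour through take-off values $\tau_\lambda$ is both unnecessary and flawed. The paper's explicit point is to give a proof that \emph{avoids} the GR measure (Ringel's original argument used it); in the paper the GR measure plays no role whatsoever in establishing this theorem. Moreover, your justification for the existence and stabilisation of $\tau_\lambda$ is wrong on two counts. Realised GR values on $\mod A$ are \emph{not} well-founded: for the Kronecker algebra the preinjectives give $\mu(Q_2)>\mu(Q_3)>\mu(Q_4)>\cdots$, an infinite strictly decreasing chain, so ``$\mu(X)$ is a finite subset of $\{1,\ldots,\ell(X)\}$'' does not buy you a minimum. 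And even granting well-foundedness, that is a descending-chain condition; it says nothing about stabilisation of the \emph{increasing} sequence $(\tau_\lambda)$, which would require an ascending-chain condition---also false (witness $\mu(R_1)<\mu(R_2)<\cdots$).

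What the paper actually does is precisely what you gesture at in your final sentences, with no GR preamble at all. One proves a compactness theorem directly: if $(\C_\alpha)_{\alpha\in\Lambda}$ are submodule closed additive subcategories of $\mod A$ whose intersection $\C$ is of finite type, then already $\C=\bigcap_{\alpha\in\Lambda'}\C_\alpha$ for some finite $\Lambda'\subseteq\Lambda$. The proof transfers to the Ziegler spectrum via the identity $\Zg(\bigcap_\alpha\C_\alpha)=\bigcap_\alpha\Zg\C_\alpha$, valid because each $\C_\alpha$ is submodule closed; finite type forces $\Zg\C\subseteq\mod A$, hence $\Zg\C$ is a finite set of open points, its complement is closed and therefore quasi-compact, and the open cover by the sets $\Ind A\setminus\Zg\C_\alpha$ admits a finite subcover; applying $\sub$ to $\Zg$ recovers the $\C_\alpha$ from their Ziegler closures. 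Zorn then finishes immediately: were the intersection of the chain of finite type, compactness would make it equal to some finite subintersection, which for a chain is a single $\C_\alpha$---contradicting infinite type. So drop the take-off machinery entirely and go straight to Ziegler compactness.
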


We give a new proof of this result which involves the Ziegler spectrum
of $A$ and uses its compactness \cite{Z}. A further analysis then
leads to a filtration of the Ziegler spectrum which is indexed by the
totally ordered set $\{\mu(X)\mid X\in\Mod A\}$ consisting of all
Gabriel--Roiter measures.

\section{From modules to partially ordered sets}

In this section we study maps from the category of $A$-modules to
complete partially ordered sets.  From a categorical point of view
this means we consider the subcategory $\Mon A$ of $\Mod A$ where the
objects are the $A$-modules and the morphisms between two modules are
the $A$-linear monomorphisms.  Then we study functors $\Mon A\to \Sc$
where $\Sc$ is a partially ordered set, viewed as a category having at
most one morphism between any two objects.

\subsection*{Submodule closed subcategories}
Let $\mathsf S(\mod A)$ denote the set of full additive subcategories
of $\mod A$ that are closed under submodules. This set is partially
ordered by inclusion and in fact complete.

Recall that a partially ordered set $\Sc$ is \emph{complete} if every
subset $\U$ of $\Sc$ has a supremum, which then is denoted by
\[\sup\U=\bigvee_{x\in\U} x.\]
Note that the supremum can be expressed as an infimum:
\[\sup\U=\inf \{y\in\Sc\mid x\le y\text{ for all }x\in\U\}.\]

Given an $A$-module $X$, let $\sub X$ denote the full subcategory of
$\mod A$ consisting of all $A$-modules that are submodules of finite
direct sums of copies of $X$.

\begin{prop}\label{pr:universal}
  The map $\Mod A\to\Sc(\mod A)$ taking a module $X$ to $\sub X$ is
  the universal map $f\colon\Mod A\to \Sc$ to a complete partially
  ordered set $\Sc$ satisfying
\begin{enumerate}
\item $f(X)\le f(Y)$ for $X\subseteq Y$ in $\Mod A$;
\item $f(X\oplus Y)=f(X)\vee f(Y)$ for $X,Y$ in $\Mod A$;
\item $f(\bigcup_\a X_\a)=\bigvee_\a f(X_\a)$ for every directed union
  $\bigcup_\a X_\a$ in $\Mod A$.
\end{enumerate}
More precisely, given such a map $f\colon\Mod A\to \Sc$, there exists
a unique map $\bar f\colon\Sc(\mod A)\to\Sc$ satisfying $f(X)=\bar
f(\sub X)$ for all $X\in\Mod A$. The map $\bar f$ is order preserving
and \[\bar f(\bigvee_\a\C_\a)=\bigvee_\a \bar f(\C_\a)\] for every set
of elements $\C_\a\in\Sc(\mod A)$.
\end{prop}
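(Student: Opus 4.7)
The plan is to separate the proposition into three tasks: verifying properties (1)–(3) for the map $X\mapsto\sub X$, establishing uniqueness of $\bar f$ via a structural lemma about $\Sc(\mod A)$, and then constructing $\bar f$ and checking it preserves suprema.

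For the first task, property (1) is immediate from the definition of $\sub X$. For (2) and (3), I would first record the explicit formula for suprema in $\Sc(\mod A)$: $\bigvee_\a \C_\a$ consists of all finitely presented modules that embed into a finite direct sum of objects drawn from $\bigcup_\a \C_\a$. This class is submodule-closed by transitivity and additive by rearranging direct sums, and any submodule-closed additive subcategory containing every $\C_\a$ must contain it. From this description $\sub(X\oplus Y) = \sub X \vee \sub Y$ is clear. For (3) applied to a directed union $X=\bigcup_\a X_\a$, the inclusion $\bigvee_\a\sub X_\a \subseteq \sub X$ is formal via (1), while for $\sub X\subseteq\bigvee_\a\sub X_\a$ one uses that any finitely presented $M\hookrightarrow X^n$ is finitely generated, hence embeds into $X_\a^n$ for a single index (Artin algebras are Noetherian, so finitely generated equals finitely presented, and a finitely generated submodule of a directed union lands in a single term).

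Next I would prove the structural lemma that every $\C\in\Sc(\mod A)$ has the form $\sub M$ for some $M\in\Mod A$. Take $M=\bigoplus_{N\in\C} N$, one summand per isomorphism class. Each $N\in\C$ is a direct summand of $M$, so $\C\subseteq\sub M$. Conversely, a finitely presented submodule of $M^n$ is finitely generated and so embeds into $\bigoplus_{N\in\C_0}N^n$ for some finite $\C_0\subseteq\C$, and the latter lies in $\C$ by additivity and submodule-closure. Hence $\sub M=\C$, and this forces any $\bar f$ satisfying $\bar f\comp\sub=f$ to be determined by $\bar f(\C)=f(M)$, giving uniqueness.

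For existence I would set $\bar f(\C)=\bigvee_{M\in\C} f(M)$. Order preservation is immediate. The identity $\bar f(\sub X)=f(X)$ splits into two inequalities: $\bar f(\sub X)\le f(X)$ uses (1) and (2), since each $M\in\sub X$ embeds in some $X^n$ and $f(X^n)=f(X)$ by an induction from $f(X\oplus X)=f(X)\vee f(X)=f(X)$; the reverse inequality uses (3), writing $X$ as the directed union of its finitely generated (hence finitely presented) submodules $X_\a$, each of which lies in $\sub X$, to get $f(X)=\bigvee_\a f(X_\a)\le\bar f(\sub X)$. Preservation of suprema reduces once more to the formula for joins in $\Sc(\mod A)$: any $M\in\bigvee_\a\C_\a$ embeds in $\bigoplus_i C_i$ with $C_i\in\C_{\a_i}$, and then (1) and (2) give $f(M)\le\bigvee_i f(C_i)\le\bigvee_\a\bar f(\C_\a)$. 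The principal technical point is the structural lemma together with the explicit description of suprema in $\Sc(\mod A)$; once those are in hand the rest is formal bookkeeping with the three axioms on $f$ and the Noetherian nature of $A$.
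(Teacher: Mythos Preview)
Your argument is correct and rests on the same structural lemma as the paper: every $\C\in\Sc(\mod A)$ equals $\sub X_\C$ with $X_\C=\bigoplus_{N\in\C}N$. The only methodological difference is in the definition of $\bar f$. The paper defines $\bar f$ directly by $\bar f(\sub X)=f(X)$ and checks well-definedness via the implication $\sub X\subseteq\sub Y\Rightarrow f(X)\le f(Y)$, then obtains preservation of suprema from the single computation $\bar f(\bigvee_\a\C_\a)=f(\bigoplus_\a X_{\C_\a})=\bigvee_\a f(X_{\C_\a})$. You instead set $\bar f(\C)=\bigvee_{M\in\C}f(M)$, which is manifestly well-defined and order preserving, and then verify $\bar f(\sub X)=f(X)$ using axioms (1)--(3). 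Your route is a little longer but has the virtue of making the role of axiom~(3) explicit: the paper's terse justification that ``$\sub X\subseteq\sub Y$ implies $X$ is a submodule of a finite direct sum of copies of $Y$'' is literally correct only for finitely presented $X$, whereas your argument (write $X$ as a directed union of finitely generated submodules and apply (3)) covers the general case cleanly.
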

\begin{proof}
  It is clear that the assignment $X\mapsto\sub X$ satisfies
  (1)--(3). Now fix an arbitrary map $f\colon\Mod A\to \Sc$ with these
  properties. Then $\sub X\subseteq \sub Y$ implies that $X$ is a
  submodule of a finite direct sum of copies of $Y$, and therefore
  $f(X)\le f(Y)$.  Thus $\bar f \colon\Sc(\mod A)\to\Sc$ taking $\sub
  X$ to $f(X)$ is well-defined and order preserving. Note that any
  $\C$ in $\Sc(\mod A)$ is of the form $\C=\sub X_\C$, where
  $X_\C=\bigoplus_{X\in\C}X$. Finally, we compute
\begin{equation*}
  \bar f(\bigvee_\a\C_\a)=\bar f(\sub\bigoplus_\a X_{\C_\a})= 
  f(\bigoplus_\a X_{\C_\a})= \bigvee_\a f(X_{\C_\a})=\bigvee_\a \bar f(\C_\a).\qedhere
\end{equation*}
\end{proof}

\subsection*{The Ziegler spectrum}
We write $\Ind A$ for the set of isomorphism classes of indecomposable
pure-injective $A$-modules.  A subset of $\Ind A$ is \emph{Ziegler
  closed} if it is of the form $\C\cap\Ind A$ for some definable
subcategory $\C\subseteq\Mod A$. Following \cite{CB}, a subcategory is
\emph{definable} if it is closed under filtered colimits, products and
pure submodules.  The Ziegler closed subsets provide the closed
subsets of a topology on $\Ind A$; see \cite{CB,Z}.  For each class
$\C$ of $A$-modules, we denote by $\Def\C$ the smallest definable
subcategory containing $\C$ and let $\Zg\C=\Def\C\cap\Ind A$. Note
that
\begin{equation}\label{eq:Zg}
\Zg\Def\C=\Zg\C\quad\text{and}\quad
\Def\Zg\C=\Def\C.
\end{equation}
The first equality is clear from the definition; for the second one,
see \cite[\S2.3]{K} or \cite[Corollary~6.9]{Z}.

Given an additive subcategory $\C$ of $\mod A$, let $\varinjlim\C$
denote the full subcategory consisting of all $A$-modules that are
filtered colimits of modules in $\C$.

\begin{prop}\label{pr:def}
  Let $\C$ be an additive subcategory of $\Mod A$ that is closed under
  submodules.  Then
\[\Def\C=\varinjlim(\C\cap\mod A)=\{X\in\Mod A\mid\sub X\subseteq\C\}.\]
\end{prop}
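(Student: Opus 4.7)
The plan is to cycle through the inclusions
\[
\varinjlim(\C\cap\mod A)\subseteq\Def\C\subseteq\mathcal E\subseteq\varinjlim(\C\cap\mod A),
\]
where $\mathcal E:=\{X\in\Mod A\mid\sub X\subseteq\C\}$. The first inclusion is immediate since $\C\cap\mod A\subseteq\Def\C$ and definable subcategories are closed under filtered colimits. The third is almost as easy: any $X\in\mathcal E$ is the directed union of its finitely generated submodules, which over the Artin algebra $A$ are finitely presented and, being in $\sub X\subseteq\C$, lie in $\C\cap\mod A$.

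The substantive step is the middle inclusion, which I would prove by showing that $\mathcal E$ is a definable subcategory containing $\C$. Containment is clear: for $Z\in\C$, any finitely presented submodule of $Z^n$ lies in $\C$ by additivity and submodule-closure. Closure of $\mathcal E$ under arbitrary (and a fortiori pure) submodules is immediate from $\sub Y\subseteq\sub X$ when $Y\subseteq X$. For closure under filtered colimits, given $X=\varinjlim X_\a$ with $X_\a\in\mathcal E$ and a finitely presented $Y\subseteq X^n$, the inclusion factors through some lift $f\colon Y\to X_\a^n$; this $f$ is automatically injective because its composition with $X_\a^n\to X^n$ is the inclusion $Y\hookrightarrow X^n$, so $Y\in\sub X_\a\subseteq\C$.

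The main obstacle is closure of $\mathcal E$ under arbitrary products, and this is where the Artin hypothesis enters essentially. Given $Z_i\in\mathcal E$ ($i\in I$) and $Y\subseteq\prod_iZ_i^n$ finitely presented, $Y$ has finite length over $A$. Each simple summand $S$ of $\soc Y$ is nonzero under some projection $\prod_iZ_i^n\to Z_{i(S)}^n$, and as $S$ is simple this projection is injective on $S$. Letting $I_0$ be the finite set of indices $i(S)$, the induced map $Y\to\bigoplus_{i\in I_0}Z_i^n$ has kernel meeting $\soc Y$ trivially; since $Y$ has finite length, this kernel is zero. Writing $Y_i\subseteq Z_i^n$ for the image of $Y$ under the $i$th projection, each $Y_i$ is finitely presented and lies in $\sub Z_i\subseteq\C$, so the diagonal $Y\hookrightarrow\bigoplus_{i\in I_0}Y_i$ exhibits $Y$ as a submodule of a finite direct sum of objects of $\C$, whence $Y\in\C$.
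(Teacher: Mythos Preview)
Your overall strategy is sound and in fact more explicit than the paper's own argument, which reduces to $\C\subseteq\mod A$, invokes Lenzing's characterisation of $\varinjlim\C$ via the reflection $X\to X_\C$, and then simply asserts that the resulting class is ``easily seen'' to be closed under filtered colimits and products. You instead verify definability of $\mathcal E$ directly, which is a reasonable and self-contained route.

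There is, however, a genuine gap in your product argument. From the fact that each simple summand $S$ of $\soc Y$ survives under \emph{some} projection $\pi_{i(S)}$ it does \emph{not} follow that the combined projection $Y\to\bigoplus_{i\in I_0}Z_i^n$ has kernel disjoint from $\soc Y$. For a concrete failure take $A=k$ a field, $Z_1=Z_2=Z_3=k$, $n=1$, and $Y=\{(a,b,a+b)\}\subseteq Z_1\times Z_2\times Z_3$; decomposing $\soc Y=Y$ as $\{(a,0,a)\}\oplus\{(0,b,b)\}$ one may legitimately choose $i(S)=3$ for both summands, giving $I_0=\{3\}$, yet the projection to $Z_3$ has kernel $\{(a,-a,0)\}\neq 0$. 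The problem is that injectivity on each summand separately does not control sums across summands.

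The fix is easy and in fact simpler than your argument: since $Y$ has finite length and $\bigcap_{i\in I}\ker(\pi_i|_Y)=0$, the descending chain condition on submodules of $Y$ gives a finite $I_0\subseteq I$ with $\bigcap_{i\in I_0}\ker(\pi_i|_Y)=0$, i.e.\ $Y\hookrightarrow\bigoplus_{i\in I_0}Z_i^n$ directly. The socle is not needed. With this correction your proof goes through.
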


\begin{proof}
  We may assume that $\C\subseteq\mod A$; the general case is then an
  immediate consequence. For each $X\in\mod A$, let $X\to X_\C$ denote
  the universal morphism to an object of $\C$. This is an epimorphism,
  since $\C$ is closed under submodules; take $X_\C=X/U$ where $U$
  denotes the minimal submodule with $X/U\in\C$. An $A$-module $Y$
  belongs to $\varinjlim\C$ if and only if each morphism $X\to Y$ with
  $X$ finitely presented factors through the morphism $X\to X_\C$; see
  \cite[Proposition~2.1]{L}. It follows that an $A$-module $Y$ belongs
  to $\varinjlim\C$ if and only if every finitely presented submodule
  belongs to $\C$. From the same description, it is easily seen that
  $\varinjlim\C$ is closed under filtered colimits and products. Thus
  $\varinjlim\C=\Def\C$.
\end{proof}

\begin{cor}\label{co:Zg-sub}
Let $\C\subseteq\Mod A$ be a full additive subcategory closed under
submodules. Then 
\[\Zg\C=\{X\in\Ind A\mid\sub X\subseteq\C\}.\]
For a set of submodule closed full additive subcategories
$\C_\a\subseteq\Mod A$, one has \[\Zg(\bigcap_\a \C_\a)=\bigcap_\a
\Zg\C_\a.\]
\end{cor}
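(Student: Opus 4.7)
The plan is to read both assertions directly off Proposition~\ref{pr:def}. For the first statement, I would simply write
\[\Zg\C=\Def\C\cap\Ind A\]
by definition, and then substitute the characterization
\[\Def\C=\{X\in\Mod A\mid\sub X\subseteq\C\}\]
supplied by Proposition~\ref{pr:def}. Intersecting with $\Ind A$ gives the claimed description. No further work is needed here, since the hypothesis that $\C$ is additive and submodule closed is exactly what Proposition~\ref{pr:def} requires.

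For the second statement, the key observation is that $\bigcap_\a\C_\a$ is again a full additive subcategory of $\Mod A$ closed under submodules, since each defining property (being additive, being closed under submodules) is preserved under arbitrary intersections. Hence the first part of the corollary applies to $\bigcap_\a\C_\a$ and yields
\[\Zg\bigl(\bigcap_\a\C_\a\bigr)=\{X\in\Ind A\mid\sub X\subseteq\textstyle\bigcap_\a\C_\a\}.\]
Now the condition $\sub X\subseteq\bigcap_\a\C_\a$ is equivalent to $\sub X\subseteq\C_\a$ for every $\a$, so the right hand side equals
\[\bigcap_\a\{X\in\Ind A\mid\sub X\subseteq\C_\a\}=\bigcap_\a\Zg\C_\a,\]
where the last equality is again the first part of the corollary applied to each $\C_\a$ individually.

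There is no real obstacle; the corollary is essentially a reformulation of Proposition~\ref{pr:def} in terms of indecomposable pure-injectives, together with the trivial remark that the hereditary-additive closure condition is stable under intersections. The only point worth being careful about is to verify that one is entitled to apply Proposition~\ref{pr:def} to the intersection, which is the content of that short stability observation.
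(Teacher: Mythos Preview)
Your proof is correct and follows essentially the same approach as the paper: both derive the first assertion directly from Proposition~\ref{pr:def}, and both reduce the second assertion to the observation that the condition ``every finitely presented submodule lies in $\C_\a$'' holds for all $\a$ if and only if it holds for the intersection. The only cosmetic difference is that you explicitly note $\bigcap_\a\C_\a$ is again additive and submodule closed and then invoke the first part, whereas the paper verifies the nontrivial inclusion $\bigcap_\a\Zg\C_\a\subseteq\Zg(\bigcap_\a\C_\a)$ by hand; these are the same argument unpacked slightly differently.
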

\begin{proof}
  The first part is clear from the preceding proposition.  Now let
  $\C=\bigcap_\a \C_\a$.  We need to check that
  $\Zg\C\supseteq\bigcap_\a \Zg\C_\a$ while the other inclusion is clear.
  Fix a module $Y$ in $\bigcap_\a \Zg\C_\a$. A finitely presented
  submodule of $Y$ belongs to $\C_\a$ for all $\a$, and therefore it
  belongs to $\C$. Thus $Y$ is in $\Zg\C$.
\end{proof}

The following example shows that in the preceding corollary the
assumption on each $\C_\a$ to be submodule closed is necessary.

\begin{exm}
  Let $A$ be a tame hereditary algebra. Given any tube $\C$ of the
  AR-quiver, $\Zg\C$ contains the unique generic $A$-module
  \cite[Corollary~8.6]{K1998}. Thus we have for two diferent tubes
  $\C_1,\C_2$ that $\Zg\C_1\cap\Zg\C_2\neq\varnothing$, while
  $\C_1\cap\C_2 =\varnothing$.
\end{exm}

For each class $\C$ of $A$-modules, let $\sub\C$ denote the full
subcategory consisting of all finitely presented submodules of finite
direct sums of modules in $\C$.

\begin{cor}\label{co:sub}
Let $\C$ be a class of $A$-modules. Then
\[\sub\C=\sub\Zg\C=\sub\Def\C.\]
\end{cor}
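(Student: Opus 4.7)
The plan is to establish $\sub\Def\C = \sub\C$ as the main step, and then to obtain $\sub\Zg\C = \sub\Def\C$ by applying this equality with $\Zg\C$ in place of $\C$ together with the identity $\Def\Zg\C = \Def\C$ from \eqref{eq:Zg}.

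For the main step, the inclusion $\sub\C \subseteq \sub\Def\C$ is immediate from $\C \subseteq \Def\C$, so all the work is in the reverse direction. First I would observe that $\sub\C$ is itself a submodule closed additive subcategory of $\mod A$, using that over an Artin algebra every submodule of a finitely presented module is again finitely presented. Proposition~\ref{pr:def} then applies to $\sub\C$ and gives
\[\Def(\sub\C) = \{Y \in \Mod A \mid \sub Y \subseteq \sub\C\}.\]
Since every $X \in \C$ trivially satisfies $\sub X \subseteq \sub\C$, we obtain $\C \subseteq \Def(\sub\C)$. As $\Def\C$ is the smallest definable subcategory containing $\C$, this forces $\Def\C \subseteq \Def(\sub\C)$. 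Consequently every $Y \in \Def\C$ satisfies $\sub Y \subseteq \sub\C$, and, using that $\Def\C$ is closed under finite direct sums, any finitely presented submodule of a finite direct sum of modules from $\Def\C$ already lies in $\sub\C$. This yields $\sub\Def\C \subseteq \sub\C$.

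With the main step in hand, the remaining equality is formal: applying it to the class $\Zg\C$ gives $\sub\Def(\Zg\C) = \sub\Zg\C$, and combining with $\Def\Zg\C = \Def\C$ from \eqref{eq:Zg} yields $\sub\Zg\C = \sub\Def\C = \sub\C$.

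The only subtle point I anticipate is recognising that Proposition~\ref{pr:def} applied to the submodule closed class $\sub\C$ is the efficient route, and verifying that its hypotheses apply (which comes down to the Noetherian property of an Artin algebra). Everything else is a formal chase of definitions, so I do not expect a serious obstacle.
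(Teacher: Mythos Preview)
Your argument is correct and follows essentially the same route as the paper: both apply Proposition~\ref{pr:def} to the submodule closed class $\sub\C$ to conclude $\Def\C\subseteq\Def(\sub\C)$, hence $\sub\Def\C\subseteq\sub\C$, and then deduce the $\Zg$ equality from \eqref{eq:Zg}. The paper simply compresses this into the chain $\sub\C\subseteq\sub\Def\C\subseteq\sub\varinjlim\sub\C=\sub\C$, using the $\varinjlim$ description from Proposition~\ref{pr:def} rather than the $\{Y\mid\sub Y\subseteq\sub\C\}$ description you chose.
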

\begin{proof}
  We apply Proposition~\ref{pr:def} and get
\[\sub\C\subseteq\sub\Def\C\subseteq\sub\varinjlim\sub\C=\sub\C.\]
Combining this identity with \eqref{eq:Zg} gives
\[\sub\Zg\C=\sub\Def\Zg\C=\sub\Def\C=\sub\C.\qedhere\]
\end{proof}

\begin{cor}\label{co:Zg}
  Let $f\colon\Mod A\to\Sc$ be a map to a complete partially ordered
  set $\Sc$ satisfying the conditions (1)--(3) from
  Proposition~\ref{pr:universal}. Then
  \[f(X)=\bigvee_{Y\in\Zg X}f(Y) \qquad\text{for all }X\in\Mod A.\] 
\end{cor}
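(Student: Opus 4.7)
The plan is to reduce the statement to Proposition~\ref{pr:universal} by factoring $f$ through the universal map $X\mapsto\sub X$, and then to use Corollary~\ref{co:sub} to replace $\sub X$ by a supremum indexed over $\Zg X$.

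More concretely, the first step is to invoke Proposition~\ref{pr:universal} to obtain the induced map $\bar f\colon\Sc(\mod A)\to\Sc$ satisfying $f(X)=\bar f(\sub X)$ for every $X\in\Mod A$, together with the crucial property that $\bar f$ preserves arbitrary suprema in $\Sc(\mod A)$. The second step is to rewrite $\sub X$ as a supremum in $\Sc(\mod A)$ indexed by the points of $\Zg X$. For this, I would apply Corollary~\ref{co:sub} to get $\sub X=\sub\Zg X$, and then observe that the subcategory $\sub\Zg X$ (submodules of finite direct sums of modules from $\Zg X$) is precisely the supremum $\bigvee_{Y\in\Zg X}\sub Y$ taken in the complete partially ordered set $\Sc(\mod A)$; this last identification is immediate from the definition of $\sub(-)$ applied to a class, together with the description of the supremum in Proposition~\ref{pr:universal} via the additive closure of a union.

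Combining these two steps and applying the supremum-preservation property of $\bar f$ then gives
\[
f(X)=\bar f(\sub X)=\bar f\Bigl(\bigvee_{Y\in\Zg X}\sub Y\Bigr)=\bigvee_{Y\in\Zg X}\bar f(\sub Y)=\bigvee_{Y\in\Zg X}f(Y),
\]
which is the desired equality.

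I do not expect any real obstacle; everything has essentially been set up in the preceding propositions and corollaries. The only point requiring a moment of care is the identification $\sub\Zg X=\bigvee_{Y\in\Zg X}\sub Y$ in $\Sc(\mod A)$, but this is just unwinding the definitions: a submodule of a finite direct sum of objects from $\Zg X$ is the same as an object of the smallest submodule-closed additive subcategory containing all the $\sub Y$ with $Y\in\Zg X$.
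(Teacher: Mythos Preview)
Your proposal is correct and follows essentially the same route as the paper: use Corollary~\ref{co:sub} to write $\sub X=\sub\Zg X=\bigvee_{Y\in\Zg X}\sub Y$, then apply the universal map $\bar f$ from Proposition~\ref{pr:universal} and its preservation of suprema. Your final chain of equalities is exactly the one the paper gives.
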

\begin{proof}
From Corollary~\ref{co:sub} one has  \[\sub X=\sub \Zg
X=\bigvee_{Y\in\Zg X}\sub Y.\]
Using the map $\bar f\colon \Sc(\mod A)\to\Sc$ from  Proposition~\ref{pr:universal},
one gets
\[f(X)=\bar f(\sub X)=\bar f \big(\bigvee_{Y\in\Zg X}\sub Y\big)=
\bigvee_{Y\in\Zg X}\bar f(\sub Y) =\bigvee_{Y\in\Zg X} f(Y).\qedhere\]
\end{proof}

\section{The Gabriel--Roiter filtration} 

In this section we study a specific inclusion preserving map $\Mod
A\to\Sc$, namely the Gabriel--Roiter measure. This map refines the
usual length function $\Mod A\to \bbN$ and has the
additional property that the set $\Sc$ is totally ordered.

\subsection*{The Gabriel--Roiter measure} 
Let $\bbN=\{1,2,3,\ldots\}$ and denote by 
$\mathbf{2}^\bbN$ the set of all subsets of $\bbN$.
We view this as a partially ordered set via
the \emph{lexicographical order}, given by
\[I\le J\quad\Longleftrightarrow\quad\inf(J\setminus I)\le
\inf(I\setminus J)\qquad\text{for } I,J\in \mathbf{2}^\bbN.\]
Note that  $\mathbf{2}^\bbN$ is totally ordered and complete.

Given an $A$-module $X$ of finite length, let $\ell(X)$ denote the
length of a composition series. Following \cite{Ga1973,Ri2005}, the
\emph{Gabriel--Roiter measure} of an $A$-module $X$ is \[\mu
(X)=\bigvee_{X_1\subsetneq \ldots\subsetneq X_r\subseteq
  X}\{\ell(X_1),\ldots,\ell(X_r)\},\] where $X_1\subsetneq
\ldots\subsetneq X_r\subseteq X$ runs through all finite chains of
submodules such that each $X_i$ is indecomposable and of finite
length. For a class $\C$ of $A$-modules, we write
\[\mu (\C)=\bigvee_{X\in\C}\mu(X).\]
The basic properties of the Gabriel--Roiter measure are summarised in
the following statement. Note that these are precisely the properties
appearing in Proposition~\ref{pr:universal}.

\begin{prop}\label{pr:GR}
  Let $X,Y$ be $A$-modules. Then
\begin{enumerate}
\item $\mu(X)\le\mu(Y)$ if $X\subseteq Y$;
\item $\mu(X)=\bigvee_\a \mu(X_\a)$ for every directed union
  $X=\bigcup_\a X_\a$;
\item $\mu(X\oplus Y)=\mu(X)\vee\mu(Y)$.
\end{enumerate}
\end{prop}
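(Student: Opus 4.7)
My strategy is to read off all three properties directly from the definition
\[
\mu(X) = \bigvee_{X_1 \subsetneq \ldots \subsetneq X_r \subseteq X} \{\ell(X_1), \ldots, \ell(X_r)\}
\]
as a supremum over finite chains of indecomposable finite-length submodules. For (1), I will observe that if $X \subseteq Y$ then every such chain in $X$ is already a chain in $Y$, so the indexing family for the supremum defining $\mu(X)$ sits inside the one defining $\mu(Y)$, giving $\mu(X) \le \mu(Y)$ at once. For (2), the inequality $\mu(X) \ge \bigvee_\alpha \mu(X_\alpha)$ follows from (1). For the reverse, I will use that given a chain $X_1 \subsetneq \ldots \subsetneq X_r$ of indecomposable finite-length submodules of $X = \bigcup_\alpha X_\alpha$, the top module $X_r$ is finitely generated, so by directedness $X_r \subseteq X_\alpha$ for some $\alpha$; then the whole chain lies in $X_\alpha$, so $\{\ell(X_1), \ldots, \ell(X_r)\} \le \mu(X_\alpha) \le \bigvee_\alpha \mu(X_\alpha)$. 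Passing to the supremum over chains gives (2).

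\textbf{Property (3).} The direction $\mu(X \oplus Y) \ge \mu(X) \vee \mu(Y)$ will come from (1) applied to the two summand inclusions. For the reverse, my essential input will be Ringel's \emph{main property} of the Gabriel--Roiter measure from \cite{Ri2005}: whenever $W$ is an indecomposable finite-length module with $W \subseteq X \oplus Y$, one has $\mu(W) \le \mu(X)$ or $\mu(W) \le \mu(Y)$. Granted this, for any chain $W_1 \subsetneq \ldots \subsetneq W_r$ of indecomposable finite-length submodules of $X \oplus Y$ I obtain
\[
\{\ell(W_1), \ldots, \ell(W_r)\} \le \mu(W_r) \le \mu(X) \vee \mu(Y),
\]
and taking the supremum over all such chains gives $\mu(X \oplus Y) \le \mu(X) \vee \mu(Y)$.

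\textbf{Main obstacle.} The genuine difficulty lies in Ringel's main property itself. The subtlety is that an indecomposable $W \hookrightarrow X \oplus Y$ need \emph{not} embed into $X$ or into $Y$ individually, so one cannot simply split the chain and reduce to the two summands---the comparison must happen at the level of $\mu$. Ringel's proof proceeds by induction on $\ell(W)$: the base case is trivial since a simple submodule of $X \oplus Y$ must lie in one of the summands, while in the inductive step one uses the Gabriel--Roiter filtration of $W$ together with a careful analysis of how its top step $W' \subsetneq W$ interacts with the decomposition $X \oplus Y$. This delicate interplay between the GR filtration and direct-sum decompositions is the real content of the proposition; everything else is bookkeeping with the definition of $\mu$.
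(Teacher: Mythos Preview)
Your arguments for (1) and (2) are exactly what the paper does: it just says these are ``clear from the definition of $\mu$'', and your unpacking of that is correct.

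For (3) your route differs from the paper's. The paper first quotes the finitely presented case as a black box (Gabriel \cite[Corollary~5.3]{Ga1973}, equivalently Ringel's main property) and then \emph{reduces the general case to it via (2)}: write $X=\bigcup_\a X_\a$ and $Y=\bigcup_\b Y_\b$ as directed unions of finitely presented modules, so $X\oplus Y=\bigcup_{(\a,\b)}X_\a\oplus Y_\b$, and compute
\[
\mu(X\oplus Y)=\bigvee_{(\a,\b)}\mu(X_\a\oplus Y_\b)=\bigvee_{(\a,\b)}\mu(X_\a)\vee\mu(Y_\b)=\mu(X)\vee\mu(Y).
\]
You instead bound each chain in $X\oplus Y$ directly by invoking the main property for the top term $W_r$. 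Both strategies ultimately rest on the same deep input; the paper's version is a bit more modular (it uses (2) to localise the problem), while yours is more hands-on with the definition of $\mu$.

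One small gap in your version: Ringel's main property in \cite{Ri2005} is stated for $X,Y$ of finite length (or finite direct sums of such), not for arbitrary $A$-modules, so as written you are overquoting. The fix is immediate and is really the same reduction the paper makes: since your $W_r$ is finitely generated, it already lies in $X'\oplus Y'$ for finitely generated submodules $X'\subseteq X$, $Y'\subseteq Y$; apply the main property there to get $\mu(W_r)\le\mu(X')\vee\mu(Y')$, and then use (1). With that sentence inserted your argument is complete.
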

\begin{proof}
(1) and (2) are clear from the definition of $\mu$. (3) holds for
finitely presented $A$-modules by \cite[Corollary~5.3]{Ga1973}. To
prove the general case, write  $X=\bigcup_\a X_\a$ and  $Y=\bigcup_\b
Y_\b$ as directed unions of finitely presented modules. Then  \[X\oplus
Y=\bigcup_{(\a,\b)} X_\a\oplus Y_\b\] and therefore
\begin{align*}
\mu(X\oplus Y) &=\bigvee_{(\a,\b)} \mu(X_\a\oplus Y_\b)\\
&=\bigvee_{(\a,\b)} \mu(X_\a)\vee\mu(Y_\b)\\
&= \big(\bigvee_\a \mu(X_\a)\big)\vee\big(\bigvee_\b\mu(Y_\b)\big)\\
&=\mu(X)\vee\mu(Y).\qedhere
\end{align*}
\end{proof}

\begin{cor}\label{co:def}
Let $\C$ be a class of $A$-modules. Then 
\[\mu(\C) =\mu(\sub\C) =\mu(\Zg\C)=\mu(\Def\C).\]
\end{cor}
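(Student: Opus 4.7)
The plan is to chain the four quantities into a cycle of inequalities using the material already developed. First, by Proposition~\ref{pr:GR}, $\mu$ satisfies conditions (1)--(3) of Proposition~\ref{pr:universal}, so Corollary~\ref{co:Zg} applies and yields
\[
\mu(X)=\bigvee_{Y\in\Zg X}\mu(Y)\qquad\text{for every }X\in\Mod A.
\]
This identity will be the main lever throughout.

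Next, I would establish $\mu(\Def\C)=\mu(\Zg\C)$. The inequality $\mu(\Zg\C)\le\mu(\Def\C)$ is immediate from $\Zg\C\subseteq\Def\C$. For the reverse, the displayed identity yields $\mu(Z)=\bigvee_{Y\in\Zg Z}\mu(Y)$ for each $Z\in\Def\C$, and $\Zg Z\subseteq\Zg\Def\C=\Zg\C$ by \eqref{eq:Zg}, so $\mu(Z)\le\mu(\Zg\C)$; taking the supremum over $Z\in\Def\C$ gives the claim.

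The remaining identities follow from the cyclic chain $\mu(\sub\C)\le\mu(\C)\le\mu(\Zg\C)\le\mu(\sub\C)$. For the first step, any $X\in\sub\C$ is a submodule of some finite direct sum $Y_1\oplus\cdots\oplus Y_n$ with $Y_i\in\C$, so parts (1) and (3) of Proposition~\ref{pr:GR} give $\mu(X)\le\mu(Y_1)\vee\cdots\vee\mu(Y_n)\le\mu(\C)$. For the second, if $X\in\C$ then $\Zg X\subseteq\Zg\C$, and the displayed identity gives $\mu(X)\le\mu(\Zg\C)$. For the third, fix $Y\in\Zg\C$: by Corollary~\ref{co:sub} every finitely presented submodule of $Y$ lies in $\sub Y\subseteq\sub\Zg\C=\sub\C$, and since $A$ is Artin (hence Noetherian) every module is the directed union of its finitely presented submodules, so part (2) of Proposition~\ref{pr:GR} gives $\mu(Y)\le\mu(\sub\C)$, and taking the supremum over $Y\in\Zg\C$ yields $\mu(\Zg\C)\le\mu(\sub\C)$.

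I do not expect any genuine obstacle: all the ingredients are in place in the preceding corollaries. The one point requiring mild care is the last step, where one must use both parts of the chain $\sub Y\subseteq\sub\Zg\C=\sub\C$ (the equality being Corollary~\ref{co:sub}) together with the fact that $Y$ is a directed union of finitely presented submodules, which holds because $A$ is Artin so that finitely generated submodules are finitely presented and closed under finite sums.
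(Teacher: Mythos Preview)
Your argument is correct, but it is considerably more elaborate than necessary. The paper's proof rests on a single observation: for \emph{any} class $\D$ of $A$-modules one has $\mu(\D)=\mu(\sub\D)$, directly from Proposition~\ref{pr:GR} (one inequality because every $X\in\sub\D$ sits inside a finite sum of modules from $\D$; the other because every $X\in\D$ is the directed union of its finitely presented submodules, each of which lies in $\sub\D$). Once this is in hand, Corollary~\ref{co:sub} gives $\sub\C=\sub\Zg\C=\sub\Def\C$, and applying the observation three times (with $\D=\C,\Zg\C,\Def\C$) finishes immediately. No appeal to Corollary~\ref{co:Zg} or any Ziegler-spectrum computation is needed.

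Your route reproves the identity $\mu(\D)=\mu(\sub\D)$ in pieces---your first cyclic inequality is one half, and your third is essentially the other half specialised to $\D=\Zg\C$---but you interpose the detour through Corollary~\ref{co:Zg} to connect $\mu(\C)$ with $\mu(\Zg\C)$, and then a separate argument for $\mu(\Def\C)=\mu(\Zg\C)$. All of this is sound, just redundant: the equality $\sub\C=\sub\Zg\C=\sub\Def\C$ already carries the full content, so factoring the proof through $\sub$ rather than through $\Zg$ makes the whole thing a two-line consequence.
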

\begin{proof}
  The first identity follows from Proposition~\ref{pr:GR}.  The rest
  then follows by Corollary~\ref{co:sub}.
\end{proof}

It seems to be an interesting question to ask, whether each element
$I=\mu(X)$ in the image of $\mu\colon\Mod A\to\mathbf{2}^\bbN$ is of
the form $I=\mu(Y)$ for some indecomposable pure-injective $A$-module
$Y$.

\subsection*{The Gabriel--Roiter filtration}

The following proposition yields a collection of (not necessarily
distinct) Ziegler closed subsets of $\Ind A$ which is indexed by the
elements of $\mathbf 2^\bbN$. For each $I\in\mathbf 2^\bbN$, set
\[\Zg I=\{X\in\Ind A\mid\mu(X)\le I\}\quad\text{and}\quad\sub
I=\{X\in\mod A\mid\mu(X)\le I\}.\]

\begin{prop}\label{pr:filtr}
  Let $I\in\mathbf 2^\bbN$.  
\begin{enumerate}
\item The set $\Zg I$ is Ziegler
  closed and the subcategory $\sub I$ is additive and submodule
  closed.
\item  If $I=\mu(X)$ for some $A$-module $X$, then
\[\mu(\Zg I)=I \quad\text{and}\quad \mu(\sub I)=I .\]
\item For each subset $\U\subseteq\Ind A$, one has
\[\mu(\U)\le I\quad\iff\quad \U\subseteq \Zg I.\]
\item For each subcategory $\C\subseteq\mod A$, one has
\[\mu(\C)\le I\quad\iff\quad \C\subseteq \sub I.\]
\end{enumerate}
\end{prop}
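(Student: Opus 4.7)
The plan is to dispatch the four parts in the order (1), (3)$=$(4), (2), since (2) uses the identification of $\Zg I$ with the Ziegler closure of $\sub I$ obtained while proving (1), and (3), (4) only need the elementary fact that in a complete poset $\bigvee_\a x_\a\le y$ iff $x_\a\le y$ for every $\a$. All of Proposition~\ref{pr:GR}, Corollary~\ref{co:Zg-sub}, Corollary~\ref{co:def}, and Corollary~\ref{co:Zg} will be used.

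For (1), the assertion that $\sub I$ is additive and submodule closed is immediate from parts (1) and (3) of Proposition~\ref{pr:GR}: a submodule of an element of $\sub I$ has smaller $\mu$ (hence lies in $\sub I$), and $\mu(X\oplus Y)=\mu(X)\vee\mu(Y)\le I$ when $\mu(X),\mu(Y)\le I$. The substantive claim is that $\Zg I$ is Ziegler closed, and I intend to prove it by showing
\[\Zg I \;=\; \Zg(\sub I),\]
where on the right $\sub I$ is viewed as a submodule-closed full additive subcategory of $\mod A$ (this is the main obstacle, and the key observation behind the whole proposition). For $X\in\Ind A$ with $\mu(X)\le I$, every finitely presented submodule $Y$ of a finite power $X^n$ satisfies $\mu(Y)\le\mu(X^n)=\mu(X)\le I$ by Proposition~\ref{pr:GR}, so $\sub X\subseteq \sub I$ and hence $X\in\Zg(\sub I)$ by Corollary~\ref{co:Zg-sub}. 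Conversely, if $\sub X\subseteq \sub I$, then by Corollary~\ref{co:def} applied to $\sub X$ we get
\[\mu(X)\;=\;\mu(\sub X)\;\le\;\mu(\sub I)\;\le\;I,\]
where the last inequality holds because $\mu(\sub I)=\bigvee_{Y\in\sub I}\mu(Y)\le I$ by definition of $\sub I$.

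For (3), if $\U\subseteq\Zg I$ then each $Y\in\U$ has $\mu(Y)\le I$, so $\mu(\U)=\bigvee_{Y\in\U}\mu(Y)\le I$; conversely, $\mu(\U)\le I$ forces $\mu(Y)\le\mu(\U)\le I$ for every $Y\in\U$. The argument for (4) is identical, reading $\sub I$ for $\Zg I$ and $\C$ for $\U$.

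Finally for (2), assume $I=\mu(X)$. The inequality $\mu(\Zg I)\le I$ and $\mu(\sub I)\le I$ both follow from the definition of the respective sets and the sup characterisation just used. For the reverse inequality on the Ziegler side, Corollary~\ref{co:def} gives $\mu(X)=\mu(\Zg X)$, and every $Y\in\Zg X$ satisfies $\mu(Y)\le\mu(\Zg X)=\mu(X)=I$, so $\Zg X\subseteq \Zg I$ and therefore
\[I\;=\;\mu(\Zg X)\;\le\;\mu(\Zg I).\]
For the reverse inequality on the $\sub$ side, write $X=\bigcup_\a X_\a$ as a directed union of finitely presented submodules; by Proposition~\ref{pr:GR}(2) each $\mu(X_\a)\le\mu(X)=I$, so $X_\a\in\sub I$, and
\[I\;=\;\mu(X)\;=\;\bigvee_\a\mu(X_\a)\;\le\;\mu(\sub I).\]
The only nontrivial step is the one in (1) identifying $\Zg I$ with $\Zg(\sub I)$; everything else is bookkeeping with the suprema provided by the preceding section.
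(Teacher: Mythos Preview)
Your proof is correct and follows essentially the same approach as the paper. The paper's version is terser: it observes that the full class $\{X\in\Mod A\mid\mu(X)\le I\}$ is additive and submodule closed by Proposition~\ref{pr:GR}, hence definable by Proposition~\ref{pr:def}, so $\Zg I$ is its intersection with $\Ind A$; your identity $\Zg I=\Zg(\sub I)$ via Corollary~\ref{co:Zg-sub} is the same fact phrased on the level of $\mod A$, and your explicit treatment of (2)--(4) just unpacks what the paper dismisses as ``clear from the definitions.'' (Minor remark: you announce Corollary~\ref{co:Zg} in the preamble but never actually use it.)
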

\begin{proof}
  The $A$-modules $X$ satisfying $\mu(X)\le I$ form an additive
  subcategory of $\Mod A$ that is closed under submodules, by
  Proposition~\ref{pr:GR}. In fact, these modules form a definable
  subcategory, by Proposition~\ref{pr:def}, and therefore $\Zg I$ is
  Ziegler closed. The rest is clear from the definitions of $\Zg I$
  and $\sub I$.
\end{proof}

We shorten our notation and set $\V_I=\Zg I$ for each  $I\in\mathbf 2^\bbN$.
\begin{cor}
  There is a filtration $(\V_I)_{I\in\mathbf 2^\bbN}$ of $\Ind A$
  consisting of Ziegler closed subsets such that the following holds:
\begin{enumerate}
\item $\V_I\subseteq \V_J$ for all $I\le J$ in $\mathbf 2^\bbN$;
\item $\V_{\inf \Sc}=\bigcap_{I\in\Sc} \V_{I}$ for all
  $\Sc\subseteq\mathbf 2^\bbN$;
\item $\mu(\V_I)\le I$ for all $I\in\mathbf 2^\bbN$, and equality
  holds if and only if $I=\mu(X)$ for some $A$-module $X$.\qed
\end{enumerate}
\end{cor}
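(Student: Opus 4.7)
The plan is to obtain all three statements as routine consequences of Proposition~\ref{pr:filtr}, together with the fact that $\mathbf{2}^\bbN$ is complete and totally ordered. Part~(1) should be immediate from the defining equation $\V_I=\{X\in\Ind A\mid\mu(X)\le I\}$: any $X\in\V_I$ satisfies $\mu(X)\le I\le J$ and hence $X\in\V_J$.

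For~(2), my approach is to unfold the definitions and appeal to the universal property of the infimum. An element $X\in\Ind A$ lies in $\V_{\inf\Sc}$ precisely when $\mu(X)\le\inf\Sc$; by the defining property of infimum in the complete poset $\mathbf{2}^\bbN$, this is equivalent to $\mu(X)\le I$ for every $I\in\Sc$, hence to $X\in\bigcap_{I\in\Sc}\V_I$. Alternatively one could invoke Corollary~\ref{co:Zg-sub} applied to the submodule closed additive subcategories $\sub I$ from Proposition~\ref{pr:filtr}(1); that route uses more machinery but yields the same conclusion.

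For~(3), the inequality $\mu(\V_I)\le I$ is Proposition~\ref{pr:filtr}(3) applied to the tautological inclusion $\V_I\subseteq\Zg I$. If $I=\mu(X)$ for some $A$-module $X$, then Proposition~\ref{pr:filtr}(2) gives $\mu(\V_I)=\mu(\Zg I)=I$. For the converse I would set $Z=\bigoplus_{Y\in\V_I}Y$, a legitimate $A$-module since $\Ind A$ is a set of isomorphism classes; writing this direct sum as a directed union of its finite partial subsums and applying Proposition~\ref{pr:GR}(2)--(3) yields $\mu(Z)=\bigvee_{Y\in\V_I}\mu(Y)=\mu(\V_I)$, so the assumption $\mu(\V_I)=I$ forces $I=\mu(Z)$.

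I do not anticipate a real obstacle: each clause reduces formally to a result already proved, and the only mildly delicate point is the extension of the direct-sum formula for $\mu$ to an arbitrary index set, which is obtained from Proposition~\ref{pr:GR} by the standard reduction to finite partial sums.
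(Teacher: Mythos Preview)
Your proposal is correct and matches the paper's intent: the corollary is stated with a bare \qed, so the authors regard all three clauses as immediate from Proposition~\ref{pr:filtr} together with the definition $\V_I=\Zg I$, exactly as you unpack. The only cosmetic quibble is that in~(3) you speak of the ``tautological inclusion $\V_I\subseteq\Zg I$'' when in fact $\V_I=\Zg I$ by definition; otherwise your argument is the intended one.
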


\subsection*{The partially ordered set of Ziegler closed sets}

We denote by $\Cl(\Ind A)$ the set of Ziegler closed subsets of $\Ind
A$; they form a complete partially ordered set.  Corollary~\ref{co:Zg}
says that the map taking an $A$-module $X$ to $\Zg X$ is universal in
the sense that any map $f\colon\Mod A\to\Sc$ to a complete partially
ordered set satisfying the conditions (1)--(3) from
Proposition~\ref{pr:universal} satisfies \[f(X)=\bigvee_{Y\in\Zg
  X}f(Y).\] The basic examples of such assignments are $X\mapsto\sub
X$ and $X\mapsto\mu (X)$.  This yields the following diagram:
\[
\xymatrix@C2pc@R=2pc{&&\Mod A\ar[d]^-\sub\ar[dll]_-\Zg\ar[drr]^-\mu\\
\Cl(\Ind A) \ar@<.5ex>[rr]^-\sub&&\mathsf S(\mod
A)\ar@<.5ex>[ll]^-\Zg
\ar@<.5ex>[rr]^-\mu&&\mathbf 2^\bbN\ar@<.5ex>[ll]^-\sub
}\]
Here, we write
\[
\xymatrix@C2pc@R=2pc{\Sc \ar@<.5ex>[rr]^-f&&\T \ar@<.5ex>[ll]^-g}\]
for an \emph{adjoint pair} of morphisms between partially ordered sets
which means that 
\[f(x)\le y \quad\iff\quad x\le g(y) \qquad\text{for all
}x\in\Sc,\,y\in\T.\] 
The adjointness of the pair $(\sub,\Zg)$ follows from
Corollary~\ref{co:Zg-sub}; for $(\mu,\sub)$ it follows from Proposition~\ref{pr:filtr}.

We say that a morphism $f\colon\Sc\to\T$ is a \emph{quotient map} if $f$
induces an isomorphism $\Sc/_\sim\to\T$, where $x\sim y$ iff
$f(x)=f(y)$ for $x,y\in\Sc$. An equivalent condition is that
$fg=\id_\T$; see \cite[Proposition~I.1.3]{GZ}.

Let us denote by $\GR(A)$ the image of $\mu\colon\Mod A\to \mathbf
2^\bbN$. This is a complete partially ordered set. 

\begin{prop} 
The morphisms
\[\sub\colon\Cl(\Ind A)\lto\Sc(\mod
A)\quad\text{and}\quad\mu\colon\Sc(\mod A)\lto\GR(A)\]
are quotient maps.
\end{prop}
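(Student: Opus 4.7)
The plan is to apply the criterion recorded just above the proposition: a morphism $f\colon\Sc\to\T$ of complete partially ordered sets is a quotient map as soon as its right adjoint $g$ satisfies $fg=\id_\T$. Since the two relevant adjoint pairs $(\sub,\Zg)$ and $(\mu,\sub)$ have already been identified in the paragraph preceding the proposition, the proposition reduces to verifying the two identities
\[\sub\comp\Zg=\id_{\Sc(\mod A)} \qquad \text{and} \qquad \mu\comp\sub=\id_{\GR(A)}.\]

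For the first identity, I would fix $\C\in\Sc(\mod A)$. Corollary~\ref{co:sub} applied to $\C$ already gives $\sub\Zg\C=\sub\C$, so what remains is the observation that $\sub\C=\C$ whenever $\C\subseteq\mod A$ is additive and submodule closed; this is immediate from unwinding the definition of $\sub\C$ as the full subcategory of finitely presented submodules of finite direct sums of objects of $\C$. Hence $\sub(\Zg\C)=\C$. For the second identity, I would pick $I\in\GR(A)$; by the very definition of $\GR(A)$ as the image of $\mu$, one has $I=\mu(X)$ for some $A$-module $X$, and Proposition~\ref{pr:filtr}(2) records precisely that $\mu(\sub I)=I$.

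I anticipate no serious obstacle; the content is bookkeeping that assembles the preceding corollaries into the quotient-map criterion. The only minor subtlety is that the symbol $\sub$ denotes two genuinely different maps in the diagram---the left adjoint $\Cl(\Ind A)\to\Sc(\mod A)$ and the right adjoint $\GR(A)\to\Sc(\mod A)$---so one must keep track of which is meant when forming the two compositions above.
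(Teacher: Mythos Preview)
Your proposal is correct and matches the paper's own proof essentially line for line: the paper also invokes Corollary~\ref{co:sub} to obtain $\sub\Zg\C=\C$ for $\C\in\Sc(\mod A)$ and Proposition~\ref{pr:filtr} to obtain $\mu(\sub I)=I$ for $I\in\GR(A)$. Your only addition is making explicit the step $\sub\C=\C$ for submodule closed additive $\C\subseteq\mod A$, which the paper leaves tacit.
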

\begin{proof}
  We have $\sub\Zg \C=\C$ for each $\C\in\Sc(\mod A)$, by
  Corollary~\ref{co:sub}. On the other hand, $\mu (\sub I)= I$ for
  each $I\in\GR(A)$, by Proposition~\ref{pr:filtr}.
\end{proof}

Given a pair of Ziegler closed subsets $\U,\V$ of $\Ind A$, when is
$\sub\U=\sub\V$? This amounts to computing $\Zg\sub\U$, since
\[\sub\U=\sub\V \quad\iff\quad \Zg\sub\U=\Zg\sub\V.\]
Note that 
\[\V\subseteq \Zg\sub \V\]
holds automatically; we  describe when equality holds.

\begin{prop}
Let $\C$ be a definable subcategory of $\Mod A$ and $\V=\C\cap\Ind A$
the corresponding Ziegler closed set. Then the following are
equivalent:
\begin{enumerate}
\item $\C$ is closed under submodules;
\item $\V$ is closed under submodules: $X\in\V$, $Y\in\Ind A$, and
  $Y\subseteq X^n$ for some $n\in\bbN$ implies $Y\in\V$;
\item $\V=\Zg\sub\V$.
\end{enumerate}
\end{prop}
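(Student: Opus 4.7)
The plan is to establish the equivalences via a cycle (3)$\Rightarrow$(1)$\Rightarrow$(2)$\Rightarrow$(3).  The first two steps are brief; the real work is in (2)$\Rightarrow$(3).

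For (3)$\Rightarrow$(1), starting from $\V=\Zg\sub\V$ and applying the identity $\Def\Zg\C=\Def\C$ from \eqref{eq:Zg}, I get $\C=\Def\V=\Def\sub\V$.  Proposition~\ref{pr:def} identifies $\Def\sub\V$ with $\varinjlim\sub\V$, the class of modules whose every finitely presented submodule lies in $\sub\V$, and this description is manifestly closed under submodules.  For (1)$\Rightarrow$(2): $X\in\V\subseteq\C$ gives $X^n\in\C$ by definable closure under products, so $Y\subseteq X^n$ forces $Y\in\C$ by (1), and then $Y\in\C\cap\Ind A=\V$.

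For (2)$\Rightarrow$(3), I would start with $X\in\Zg\sub\V$ (so $X\in\Ind A$ and $\sub X\subseteq\sub\V$) and observe that, since $A$ is Artin, $X$ is the directed union of its finite-length submodules, placing $X\in\varinjlim\sub\V=\Def\sub\V$.  It then suffices to show $\sub\V\subseteq\C$: this will force $\Def\sub\V\subseteq\C$, whence $X\in\C\cap\Ind A=\V$.  Given $W\in\sub\V$, decompose $W=\bigoplus W_j$ into finite-length indecomposables $W_j\in\Ind A$; each $W_j$ embeds into a finite direct sum $Z_1\oplus\cdots\oplus Z_L$ of modules in $\V$, so the task reduces to the following strengthening of (2): any $N\in\Ind A$ embedded into a finite direct sum of $\V$-modules is itself in $\V$.

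I would prove this strengthened closure by induction on $L$.  The base case $L=1$ is exactly (2).  For $L\ge 2$, analyze the projection $\pi_L\colon Z_1\oplus\cdots\oplus Z_L\to Z_L$ restricted to $N$: if $\pi_L|_N$ is injective then $N\hookrightarrow Z_L$ and (2) applies; if $\pi_L|_N=0$ then $N\subseteq Z_1\oplus\cdots\oplus Z_{L-1}$ and the inductive hypothesis applies.  The hard case is when $\pi_L|_N$ has both a nonzero kernel and a nonzero image: $K=\ker\pi_L|_N$ and $\pi_L(N)$ both decompose into indecomposable summands known to lie in $\V$ (by induction and by (2), respectively), but since definable subcategories need not be closed under extensions, one cannot directly conclude $N\in\V$.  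This is the main obstacle; my plan is to exploit the indecomposability and pure-injectivity of $N$ together with the concrete embedding into $\bigoplus Z_l$ and the local structure of $\End(N)$, deriving a pp-type contradiction in the event $N\notin\V$: a pp-pair separating $N$ from $\V$ would vanish on the ambient $\bigoplus Z_l$, and tracing a distinguishing element back through the embedding should produce the required contradiction.
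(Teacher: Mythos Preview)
Your (3)$\Rightarrow$(1) and (1)$\Rightarrow$(2) are correct and agree with the paper. For (2)$\Rightarrow$(3) the paper is much terser: it asserts that condition~(2) yields $\Def\V=\Def\sub\V$, and then chains $\V=\Zg\Def\V=\Zg\Def\sub\V=\Zg\sub\V$ via \eqref{eq:Zg}. Your longer route is in effect an attempt to justify the non-trivial half of that assertion, namely $\sub\V\subseteq\Def\V=\C$; after decomposing a module of $\sub\V$ into indecomposable summands this is precisely your ``strengthened~(2)''. So you have correctly located where the work lies.

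There is, however, a genuine gap in your treatment of the strengthened closure. The induction on $L$ stalls in the ``hard case'' where $\pi_L|_N$ has both nonzero kernel and nonzero image, and the pp-pair sketch you propose does not close it. Concretely: if $(\phi/\psi)$ is closed on $\C$ but open on $N$, then any $a\in\phi(N)\setminus\psi(N)$ does satisfy $\psi$ in the ambient module $\bigoplus_l Z_l\in\C$, but the existential witnesses for $\psi$ lie in $\bigoplus_l Z_l$ and nothing forces them into $N$. This is exactly the failure of purity of the inclusion $N\subseteq\bigoplus_l Z_l$; neither the indecomposability of $N$, the locality of $\End(N)$, nor the explicit embedding supplies the missing constraint, and definable subcategories are not closed under extensions, so knowing $K$ and $\pi_L(N)$ lie in $\C$ gives no purchase on $N$. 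As written, the plan does not lead to a contradiction and the argument is incomplete. Note that the paper itself does not spell this step out; if one reads condition~(2) in the stronger form ``$Y\in\Ind A$ and $Y\subseteq Z_1\oplus\cdots\oplus Z_n$ with each $Z_i\in\V$ implies $Y\in\V$'', then both the paper's one-line claim $\Def\V=\Def\sub\V$ and your reduction go through immediately.
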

\begin{proof} 
(1) $\Rightarrow$ (2): Clear.

(2) $\Rightarrow$ (3): That $\V$ is closed under submodules implies
$\Def\V=\Def\sub\V$. Using \eqref{eq:Zg} then gives
\[\V=\Zg\Def\V=\Zg\Def\sub\V=\Zg\sub\V.\]

(3) $\Rightarrow$ (1): The equality in (3) yields
\[\C=\Def\V=\Def\Zg\sub\V=\Def\sub\V=\Def\sub\Def\V=\Def\sub\C.\]
Here, \eqref{eq:Zg} and Corollary~\ref{co:sub} are used. The equality
$\C=\Def\sub\C$ implies that $\C$ is closed under submodules.
\end{proof}

\subsection*{The Kronecker algebra}
Let $\La=\smatrix{k&k^2\\ 0&k}$ be the Kronecker algebra over an
algebraically closed field $k$.  A complete list of the
indecomposables in $\mod\La$ is given by the preprojectives $P_n$, the
regulars $R_n(\la)$, and the preinjectives $Q_n$; see
\cite[Thm.~VIII.7.5]{ARS}. More precisely,
\[\Ind\La\cap\mod\La=\{P_n\mid n\in\bbN\}\cup\{R_n(\la)\mid
n\in\bbN,\,\la\in\mathbb P^1(k)\}\cup\{Q_n\mid n\in\bbN\},\] and the
inclusion order is described by the following Hasse diagram.
\[\xymatrix@=0.4em{
&&\ar@{--}[d]&\ar@{--}[dd]&&\ar@{--}[dd]\\
7&&\bullet\ar@{-}[dd]&&&&\bullet\ar@{-}[dl]\ar@{-}[dlll]\\
6&&&\bullet\ar@{-}[dd]\ar@{-}[dl]&\cdots&\bullet\ar@{-}[dd]\ar@{-}[dlll]\\
5&&\bullet\ar@{-}[dd]&&&&\bullet\ar@{-}[dl]\ar@{-}[dlll]\\
4&&&\bullet\ar@{-}[dd]\ar@{-}[dl]&\cdots&\bullet\ar@{-}[dd]\ar@{-}[dlll]\\
3&&\bullet\ar@{-}[dd]&&&&\bullet\ar@{-}[dl]\ar@{-}[dlll]\\
2&&&\bullet\ar@{-}[dl]&\cdots&\bullet\ar@{-}[dlll]\\
1&&\bullet&&&&\bullet\\ \ell&&P_n&&R_n(\la)&&Q_n }\] 
From this, one computes
\begin{align*}
\mu(P_n)&=\{1,3,5,\ldots,2n-1\}\\
\mu(R_n)&=\{1,2,4,\ldots,2n\}\\
\mu(Q_n)&=\{1,2,4,\ldots ,2n-2,2n-1\}
\end{align*}
where the Gabriel--Roiter measure of $R_n=R_n(\la)$ does not depend on
$\la$. This gives the following order:
\[{\scriptstyle\m(Q_1)=\m(P_1)<\m(P_2)<\m(P_3)<\;\;\ldots\;\;<\m(R_1)<\m(R_2)<\m(R_3)<\;\;\ldots\;\:<\m(Q_4)<\m(Q_3)<\m(Q_2)}\]
The indecomposable pure-injective $\La$-modules which are not finitely
presented are the Pr\"ufer modules $R_\infty(\la)=\li R_n(\la)$, the
adic modules $\widehat R(\la) =\lp R_n(\la)$, and the generic
module $G$; see \cite{P1998,R1998}. Thus
\[\Ind \La\setminus\mod \La=\{R_\infty(\la),\widehat R(\la)\mid
\la\in\mathbb P^1(k)\}\cup\{G\}.\] 
Now one computes
\begin{gather*}
\mu(\widehat R(\la))=\mu(G)=\{1,3,5,7,\ldots\}=\bigvee_{n\ge 1}\mu(P_n)\\
\mu(R_\infty(\la))=\{1,2,4,6,\ldots\}=\bigvee_{n\ge 1}\mu(R_n)=\bigwedge_{n\ge 1}\mu(Q_n)
\end{gather*}
and this completes the list of values of the Gabriel--Roiter measure;
see also \cite[Appendix~B]{Ri2005}. Note that this yields the
description of the Gabriel--Roiter filtration of $\Ind\La$.

\section{Compactness}

The collection of submodule closed additive subcategories of $\mod A$
enjoys a compactness property which we discuss in this section.  A
consequence is the existence of minimal submodule closed subcategories
of infinite type. This is a somewhat surprising result from a recent
article of Ringel \cite{R}. Note that the proof given here is quite
different from Ringel's.  He uses the Gabriel--Roiter measure, while
the compactness result is derived from the compactness of the Ziegler
spectrum.

Let $\C$ be an additive subcategory of $\mod A$ which is closed under
direct summands. We say that $\C$ is of \emph{finite type} if $\C$
contains only finitely many pairwise non-isomorphic indecomposable
modules. Note that a submodule closed subcategory $\C$ is of finite
type if and only if the set
\[\{\D\in\Sc(\mod A)\mid \D\subseteq\C\}\]
is finite.

\begin{thm}\label{th:compact}
Let $(\C_\a)_{\a\in\La}$ be a collection of additive subcategories
$\C_\a\subseteq\mod A$ that are submodule closed. If
$\C=\bigcap_{\a\in\La}\C_\a$ is of finite type, then there is a finite
subset $\La'\subseteq \La$ such that $\C=\bigcap_{\a\in\La'}\C_\a$.
\end{thm}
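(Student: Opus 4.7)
The plan is to pass to the Ziegler spectrum $\Ind A$, apply Ziegler's compactness theorem \cite{Z}, and descend via the adjoint pair $(\sub,\Zg)$ of Section~2. By Corollary~\ref{co:Zg-sub} each $\Zg\C_\a\subseteq\Ind A$ is Ziegler closed and $\bigcap_{\a\in\La}\Zg\C_\a=\Zg\C$. It therefore suffices to exhibit a finite subset $\La'\subseteq\La$ such that $\bigcap_{\a\in\La'}\Zg\C_\a=\Zg\C$; granted this, Corollary~\ref{co:Zg-sub} yields $\Zg(\bigcap_{\a\in\La'}\C_\a)=\Zg\C$, and then applying $\sub$ together with Corollary~\ref{co:sub}, and using that both $\C$ and $\bigcap_{\a\in\La'}\C_\a$ are submodule closed and so coincide with their $\sub$-closures, gives $\bigcap_{\a\in\La'}\C_\a=\C$.

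The main step is to show that $\Zg\C$ is \emph{clopen} in $\Ind A$. Let $C_1,\ldots,C_n$ be the indecomposable objects of $\C$ and set $M=\bigoplus_i C_i$, so that $\C=\add(M)$. Each $C_i$ has finite length, hence $M$ is $\Sigma$-pure-injective; this identifies $\Def\C=\varinjlim\C$ with $\Add(M)$, whose indecomposable pure-injective direct summands are exactly $C_1,\ldots,C_n$. Thus $\Zg\C=\{C_1,\ldots,C_n\}$ is finite. By the classical isolation result of Ziegler for finitely presented indecomposable modules over an Artin algebra, each $C_i$ is an isolated point of $\Ind A$, so $\Zg\C$ is a finite union of open points and hence itself Ziegler open.

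With $\Zg\C$ clopen, the family $\{\Zg\C\}\cup\{\Ind A\setminus\Zg\C_\a\mid\a\in\La\}$ is an open cover of $\Ind A$: indeed $\bigcap_\a\Zg\C_\a=\Zg\C$ forces the complements to cover $\Ind A\setminus\Zg\C$. Ziegler compactness extracts a finite subcover, producing a finite $\La'\subseteq\La$ with $\bigcup_{\a\in\La'}(\Ind A\setminus\Zg\C_\a)\supseteq\Ind A\setminus\Zg\C$, equivalently $\bigcap_{\a\in\La'}\Zg\C_\a\subseteq\Zg\C$; the reverse inclusion is automatic. The principal obstacle is precisely the clopen-ness of $\Zg\C$: its closedness is immediate from Section~2, but its openness depends both on the finite-type hypothesis (collapsing $\Zg\C$ to finitely many finitely presented members via $\Sigma$-pure-injectivity) and on Ziegler's isolation of such members in $\Ind A$.
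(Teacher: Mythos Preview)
Your proof is correct and follows the same route as the paper's: pass to $\Ind A$ via Corollary~\ref{co:Zg-sub}, use that $\Zg\C$ is a finite set of isolated (hence open) points together with quasi-compactness of $\Ind A$ to extract a finite $\La'$, and descend via $\sub\Zg\D=\D$ from Corollary~\ref{co:sub}. The paper packages the facts you establish about $\Zg\C$ into Proposition~\ref{pr:Zg}; one minor point is that the definability of $\Add(M)$ is usually attributed to $M$ being endofinite (which finite-length modules over an Artin algebra are) rather than to $\Sigma$-pure-injectivity alone, cf.\ \cite[\S2.5]{CB}.
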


The proof uses some properties of the Ziegler spectrum which are collected
in the following proposition. For a general introduction, we refer the reader to
\cite{K,P2009}.

\begin{prop}\label{pr:Zg}
The space $\Ind A$ has the following properties.
\begin{enumerate}
\item The space $\Ind A$ is quasi-compact.
\item For $X\in\Ind A\cap\mod A$, the subset $\{X\}$ is open.
\item An additive subcategory $\C\subseteq\mod A$ is of finite type iff $\Zg\C\subseteq\mod A$.
\end{enumerate}
\end{prop}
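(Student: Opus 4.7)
The plan is to handle the three items in turn; parts (1) and (2) are classical facts from the theory of the Ziegler spectrum and I would invoke them from the references, while part (3) requires a short independent argument. Part (1) is Ziegler's original compactness theorem \cite{Z}: a basis of opens on $\Ind A$ is indexed by pp-pairs $(\phi/\psi)$, and quasi-compactness is equivalent, under the anti-equivalence between Ziegler closed sets and Serre subcategories of the finitely presented functors on $\mod A$, to the fact that every such Serre subcategory is generated by its finitely presented members. For (2), when $X\in\mod A$ is indecomposable, $\End_A(X)$ is local, so the simple functor $\Hom(X,-)/\rad\Hom(X,-)$ is finitely presented with open support on $\Ind A$ equal to $\{X\}$. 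I would refer to \cite{K,P2009,Z} for both.

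For (3), the background fact is that every finitely generated module over an Artin algebra is pure-injective, so each indecomposable object of $\C$ lies in $\Ind A$ and hence in $\Zg\C$. For the forward implication, if $\C$ is of finite type I would write $\C=\add M$ with $M=M_1\oplus\cdots\oplus M_n$ a finite sum of indecomposables in $\mod A$, so that $\Def\C=\Def M$. Any indecomposable pure-injective $N\in\Def M$ embeds purely in some product $M^I$; pure-injectivity of $N$ splits the inclusion, and Krull--Schmidt applied to the finite pure-injective decomposition of $M$ forces $N\cong M_i$ for some $i$. Hence $\Zg\C=\{M_1,\ldots,M_n\}\subseteq\mod A$.

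For the reverse implication, the strategy is to combine (1) and (2): if $\Zg\C\subseteq\mod A$, then by (2) every singleton of $\Zg\C$ is open in $\Ind A$, and therefore $\Zg\C$ is discrete in its subspace topology. Being Ziegler closed, $\Zg\C$ is quasi-compact by (1), and a quasi-compact discrete space is finite. Since every indecomposable module in $\C$ is pure-injective and thus belongs to $\Zg\C$, the subcategory $\C$ has only finitely many indecomposables. The main obstacle is the forward direction of (3), where one must control the passage from $\add M$ to the a priori larger category $\Def M$; the argument hinges on pure-injectivity of $M$, which is where the hypothesis that $A$ is an Artin algebra (rather than an arbitrary ring) enters essentially.
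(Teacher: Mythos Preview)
Your argument is correct and follows the paper almost exactly: parts (1), (2), and the converse in (3) are handled identically, while for the forward direction of (3) the paper simply cites \cite{CB} for the fact that the direct sums of modules in $\C$ form a definable subcategory, which is precisely the content of your direct-summand argument. One small imprecision worth tightening: the assertion that every indecomposable pure-injective $N\in\Def M$ embeds purely in some power $M^I$ is not a formal consequence of the definition of $\Def$ (closure under filtered colimits, not just products and pure submodules, is involved); it already uses that $M$ is $\Sigma$-pure-injective, i.e.\ of finite endolength, so the Artin-algebra hypothesis enters at this step and not only when you split the embedding.
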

\begin{proof}
(1)  See \cite[Theorem~4.9]{Z} or \cite[\S2.5]{CB}.

(2) See \cite[Proposition~13.1]{P1988}.

(3) If $\C$ is of finite type, then the direct sums of modules in $\C$ form a definable subcategory; see \cite[\S2.5]{CB}. Thus $\Zg\C\subseteq\mod A$. If $\C$ is of infinite type, then part (1) and (2) imply that $\Zg\C$ contains modules which are not finitely presented.
\end{proof}

\begin{proof}[Proof of Theorem~\ref{th:compact}]
  We have $\Zg\C=\bigcap_{\a\in\La}\Zg\C_\a$ by
  Corollary~\ref{co:Zg-sub}. Using the properties of $\Ind A$
  collected in Proposition~\ref{pr:Zg}, it follows that
  $\Zg\C=\bigcap_{\a\in\La'}\Zg\C_\a$ for some finite subset
  $\La'\subseteq \La$. We have $\sub\Zg\D=\D$ for each submodule
  closed additive subcategory $\D\subseteq\mod A$, by
  Corollary~\ref{co:sub}.  Thus $\C=\bigcap_{\a\in\La'}\C_\a$.
\end{proof}

A combination of Theorem~\ref{th:compact} with Zorn's lemma gives the
following result, and  Ringel's theorem  \cite{R} mentioned in the
introduction is an immediate consequence.

\begin{cor}
  Let $\Sc$ be a set of submodule closed additive subcategories of
  $\mod A$ that is closed under forming intersections.  Then the
  subset of $\Sc$ consisting of all subcategories of infinite type is
  either empty or it has a minimal element.  \qed
\end{cor}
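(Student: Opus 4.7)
The plan is to apply Zorn's lemma to the poset $\T$ of subcategories in $\Sc$ of infinite type, ordered by reverse inclusion, using Theorem~\ref{th:compact} to verify the chain condition.

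Assume $\T$ is nonempty. To invoke Zorn's lemma we must show that every chain $(\C_\a)_{\a\in\La}$ in $\T$, totally ordered by inclusion, admits a lower bound in $\T$. Since $\Sc$ is closed under intersections, the subcategory
\[\C=\bigcap_{\a\in\La}\C_\a\]
lies in $\Sc$, and it is clearly contained in every $\C_\a$. It remains to check that $\C$ is of infinite type.

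Suppose, for contradiction, that $\C$ is of finite type. Then Theorem~\ref{th:compact} yields a finite subset $\La'\subseteq\La$ with $\C=\bigcap_{\a\in\La'}\C_\a$. Because $(\C_\a)_{\a\in\La}$ is a chain and $\La'$ is finite, this finite intersection coincides with $\C_{\a_0}$ for the index $\a_0\in\La'$ corresponding to the smallest member of the finite chain $(\C_\a)_{\a\in\La'}$. But $\C_{\a_0}\in\T$ is of infinite type, contradicting the assumption that $\C=\C_{\a_0}$ is of finite type.

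Hence $\C$ is of infinite type and belongs to $\T$, providing the required lower bound. By Zorn's lemma (applied with respect to reverse inclusion), $\T$ contains a minimal element. The main conceptual step is recognising that finiteness of $\La'$ in the compactness theorem, together with totality of the chain, collapses the intersection to a single $\C_{\a_0}$; the rest is a routine Zorn argument.
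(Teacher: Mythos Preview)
Your proof is correct and is precisely the argument the paper has in mind: the paper's own proof consists only of the remark that the corollary follows by combining Theorem~\ref{th:compact} with Zorn's lemma, and you have spelled out exactly that combination.
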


\end{document}